\def\nref#1{{\rm (\ref{#1})}}
\definecolor{grismeu}{gray}{0.85}
\title{On the convergence of weighted-average consensus}
\newtheorem{teor}{Theorem}[section]
\newtheorem{coro}{Corollary}[section]
\newtheorem{remark}{Remark}[section]
\author{Francisco Pedroche\thanks{%
Institut de Matem\`{a}tica Multidisciplinària,
Universitat Polit\`{e}cnica de Val\`{e}ncia.
Cam\'{\i} de Vera s/n. 46022 Val\`{e}ncia. Spain.  pedroche@imm.upv.es.}
\and Miguel Rebollo\footnotemark[2]
\and Carlos Carrascosa\footnotemark[2]
\and Alberto Palomares\thanks{%
Departament de Sistemes Informàtics i Computació,
Universitat Polit\`{e}cnica de Val\`{e}ncia.
Cam\'{\i} de Vera s/n. 46022 Val\`{e}ncia. Spain.} }
\date{}
\begin{document}

\maketitle

\begin{abstract}
In this note we give sufficient conditions for the convergence of the iterative algorithm called
weighted-average consensus in directed graphs. We study the discrete-time form of this algorithm.
We use standard techniques from matrix theory to prove the main result. 
As a particular case one can obtain well-known results for non-weighted average consensus. We also give a corollary for undirected graphs.
\end{abstract}

\noindent
{\bf Keywords:}
Consensus algorithms,
iterative methods, distributed consensus, multi-agent consensus, Perron-Frobenius.
\\



\section{Introduction}\label{sec:intro}

Let $G=(V,E)$ be a graph, with $V=\{ v_1, v_2, \ldots, v_n \}$
a non-empty set of $n$ vertices (or nodes) and
$E$ a set of $m$ edges. Each edge is defined by the pair
$(v_i,v_j)$, where $v_i, v_j \in V$.  The adjacency matrix
of the graph $G$ is $A = (a_{ij}) \in \mathbb{R}^{n \times n}$ such that
$a_{ij}=1$ if there is a directed edge connecting node $v_i$ to $v_j$, and $0$, otherwise.
We consider directed graphs (digraphs).
The out-degree $d_i$ of a node $i$ is the number of its out-links, i.e.,
$d_i= \sum_{j=1}^{n} a_{ij}$. We define the Laplacian matrix of the graph as $L = D-A$
where $D$ is the diagonal matrix with the out-degrees.
$D=diag(d_{1},d_{2}, \ldots, d_{n})$.

We recall that a permutation matrix $F$ is just the identity matrix with its
rows re-ordered. Permutation matrices are orthogonal, i.e., $F^T=F^{-1}$.
A matrix $A \in \mathbb{R}^{n \times n}$, with $n \geq 2$, is said to be
reducible
if there is a permutation matrix $F$ of order $n$ and there is some
integer $r$ with $ 1 \leq r \leq n-1$ such that $
F^T A F = \left[
     \begin{array}{cc}
       B & C \\
       0 & D \\
     \end{array}
   \right]$, where $B \in \mathbb{R}^{r \times r}$, $C \in \mathbb{R}^{r \times (n-r)}$, and
$0  \in \mathbb{R}^{(n-r) \times r}$ is a zero matrix.
A matrix is said to be irreducible if it is not reducible.
It is known (see, e.g., \cite{HoJo}) that the adjacency matrix $A$ of a directed graph
is irreducible if and only if the associated graph $G$ is strongly connected. For an undirected
graph irreducibility implies connectivity. Note that the Laplacian $L=D-A$ is irreducible if and only if $A$ is irreducible.
Note that
$L {\bf e} = {\bf 0}$, with ${\bf e}$ the vector of all ones. $L$ has $0$ as an eigenvalue, and therefore is a singular matrix. Note also that $L$ is irreducible if and only if $G$ is connected.

We use the sum norm (or $l_1$ norm) as the vector norm: $\| {\bf v} \|_1 =
|v_1| + |v_2 | + \ldots |v_n|$.
We denote $N=\{1,2,\ldots,n\}$.

\section{Weighted-average consensus}

Let $G$ be a directed graph. Let ${\bf x}^{0}$ be a (column) vector with the initial state of each node.
Let ${\bf w}=[w_1,w_2, \ldots, w_n]^T$ a vector with the weight associated to each node. The following
algorithm (see \cite{OlFaMu}, p. 225)  can be used to obtain the value of the weighted-average consensus
(that is, a common value for all the nodes, reached by consensus)

\begin{equation}\label{eq:wdotx}
W \dot{x} = - L x
\end{equation}
with $W=diag(w_1,w_2, \ldots, w_n)$,
and $L=D-A$, where $D$ is a diagonal matrix with the out-degrees. A discretized version of \nref{eq:wdotx} is

\begin{equation}\label{eq:xk+1}
x^{k+1}_{i} = x^{k}_{i} + \frac{\epsilon}{w_i} \sum_{j \in N_i}  a_{ij} (x_{j}^k-x_i^k), \quad \forall i \in N
\end{equation}
where $N_i$ denote the set of neighbors of node $i$, that is:
$j \in N_i \longleftrightarrow (v_i,v_j) \in E$. The matrix form of \nref{eq:xk+1} is
\begin{equation}\label{eq:xkP}
{\bf x}^{k+1}  = P_w {\bf x}^{k}  \quad k=0,1,2, \ldots
\end{equation}
where
\begin{equation}\label{eq:P_w}
P_w = I - \epsilon L_w
\end{equation}
where we have denoted $L_w = W^{-1}L$.

From \nref{eq:xk+1} it follows that
\begin{equation}\label{eq:pxo}
{\bf x}^{k}  = P_w^k {\bf x}^{0}, \quad k=1,2, \ldots
\end{equation}

In this note we prove the following

{\bf Theorem}
{\em Let $G$ be a strongly connected digraph.
If $\epsilon < min_{i \in N}(w_i/d_i)$ then the scheme \nref{eq:pxo} converges to the weighted-average
consensus given by
\[
{\bf x_w} = \alpha \, {\bf e}
\]
with ${\bf e}$ the vector of all ones, and $\alpha = \frac{1}{\| {\bf v} \|_1} { {\bf v}^{T} {\bf x}^0}$,
where $ {\bf v}$ is a positive eigenvector of $L_w^T$ associated with the zero eigenvalue, that is}

\[
 L_w^T   {\bf v} = {\bf 0}
\]

\section{Known results}
A matrix $P$ is said to be nonnegative if $P_{ij} \geq 0, \forall (i,j) \in N$. A matrix is said to be
primitive if it is irreducible and has only one eigenvalue of maximum modulus \cite{HoJo}.

The following theorem is known as Perron-Frobenius Theorem.

\begin{teor}[\cite{HoJo}, p. 508]\label{th:perrfro}
If $P \in M_n$ is nonnegative and irreducible, then
\begin{description}
\item[a)] $\rho(P) >0$
\item[b)] $\rho(P)$ is an eigenvalue of $P$
\item[c)] There is a positive vector ${\bf x}$ such that $P{\bf x}=\rho(P){\bf x}$
\item[d)] $\rho(P)$ is an algebraically (and geometrically) simple eigenvalue of $P$
\end{description}

\end{teor}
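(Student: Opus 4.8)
The plan is to prove the four assertions together via the Collatz--Wielandt variational characterization, reducing the irreducible case to a strictly positive setting by a standard smoothing device. First I would record the one structural consequence of irreducibility that drives everything: the directed graph associated with $P$ is strongly connected, so every ordered pair of indices is joined by a path of length at most $n-1$, and therefore $(I+P)^{n-1}$ is entrywise strictly positive. Then I would introduce, on the set of nonnegative nonzero vectors, the homogeneous-of-degree-zero function
\[
r({\bf x}) = \min_{i:\, x_i > 0} \frac{(P{\bf x})_i}{x_i}, \qquad \rho^* = \sup_{{\bf x} \geq {\bf 0},\ {\bf x} \neq {\bf 0}} r({\bf x}),
\]
which is the candidate for $\rho(P)$ and for whose maximizer I will produce a positive eigenvector.

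Next I would show the supremum is attained and identify it. Because $P{\bf y} - r({\bf x}){\bf y} = (I+P)^{n-1}\bigl(P{\bf x} - r({\bf x}){\bf x}\bigr) \geq {\bf 0}$ for ${\bf y} = (I+P)^{n-1}{\bf x}$, one has $r({\bf y}) \geq r({\bf x})$, so $\rho^*$ is already the supremum over strictly positive vectors. Maximizing $r$ over the compact set $\{(I+P)^{n-1}{\bf x} : {\bf x} \geq {\bf 0},\ \|{\bf x}\|_1 = 1\}$, whose members are strictly positive and on which $r$ is continuous, produces ${\bf z} > {\bf 0}$ with $r({\bf z}) = \rho^*$. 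To see ${\bf z}$ is an eigenvector I argue by contradiction: if $P{\bf z} - \rho^*{\bf z} \geq {\bf 0}$ were nonzero, multiplying by $(I+P)^{n-1}$ would make it strictly positive, giving $P{\bf w} > \rho^*{\bf w}$ for ${\bf w} = (I+P)^{n-1}{\bf z}$ and hence $r({\bf w}) > \rho^*$, a contradiction. Thus $P{\bf z} = \rho^*{\bf z}$ with ${\bf z} > {\bf 0}$, which is (c). That $\rho^* = \rho(P)$ follows because any eigenpair $(\lambda,{\bf y})$ satisfies $|\lambda|\,|{\bf y}| \leq P|{\bf y}|$ entrywise, so $|\lambda| \leq r(|{\bf y}|) \leq \rho^*$, giving $\rho(P) \leq \rho^*$; the reverse holds since $\rho^*$ is itself an eigenvalue. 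This is (b). Finally (a) follows since $P{\bf z} = \rho(P){\bf z}$ with ${\bf z} > {\bf 0}$ forces every row of $P$ to be nonzero whenever $\rho(P)=0$ would force $P=0$, impossible for an irreducible matrix with $n \geq 2$.

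For the simplicity claim (d) I would first obtain geometric simplicity. Given any real eigenvector ${\bf y}$ for $\rho(P)$, normalized to have a positive entry, let $t^* = \max\{t : {\bf z} - t{\bf y} \geq {\bf 0}\}$, which is finite; then ${\bf z} - t^*{\bf y}$ is a nonnegative eigenvector possessing a zero coordinate. Applying $(I+P)^{n-1}$ shows it is either strictly positive or identically zero, and the former is excluded by the zero coordinate, so ${\bf z} - t^*{\bf y} = {\bf 0}$ and the eigenspace is one-dimensional (complex eigenvectors reduce to the real case via real and imaginary parts). The hard part will be upgrading this to \emph{algebraic} simplicity, since a one-dimensional eigenspace does not by itself rule out a nontrivial Jordan block. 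For this I would apply everything already proved to $P^T$ (also nonnegative and irreducible) to get a positive left eigenvector ${\bf v}$, and use $\frac{d}{d\lambda}\det(\lambda I - P) = \mathrm{tr}\,\mathrm{adj}(\lambda I - P)$. At $\lambda = \rho(P)$ the matrix $\rho(P)I - P$ has rank exactly $n-1$, so its adjugate is a nonzero rank-one matrix whose columns lie in $\mathrm{span}({\bf z})$ and whose rows lie in $\mathrm{span}({\bf v}^T)$; writing it as $c\,{\bf z}\,{\bf v}^T$ with $c \neq 0$ gives the derivative equal to $c\,{\bf v}^T{\bf z}$, which is nonzero because ${\bf v}^T{\bf z} > 0$. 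Hence $\rho(P)$ is a simple root of the characteristic polynomial, establishing (d).
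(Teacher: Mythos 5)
You should first note a mismatch of expectations here: the paper does not prove this statement at all --- it is quoted as a known result from Horn and Johnson (p.~508) and used as a black box in the proof of the paper's main theorem --- so there is no internal argument to compare yours against; the relevant benchmark is the standard textbook proof. Measured against that, your blind proof is correct and is essentially the classical Wielandt (Collatz--Wielandt) argument. The key maneuvers are all sound: $(I+P)^{n-1}>0$ from strong connectivity; the inequality $r\bigl((I+P)^{n-1}{\bf x}\bigr)\ge r({\bf x})$, which lets you trade the supremum over the closed nonnegative orthant (where $r$ fails to be continuous at vectors with vanishing coordinates) for a maximum of a continuous function over a compact set of strictly positive vectors; the contradiction showing the maximizer ${\bf z}$ is an eigenvector; the bound $|\lambda|\le r(|{\bf y}|)\le\rho^*$ identifying $\rho^*$ with $\rho(P)$; the extremal-$t^*$ argument for geometric simplicity; and the adjugate computation $\frac{d}{d\lambda}\det(\lambda I-P)=\mathrm{tr}\,\mathrm{adj}(\lambda I-P)$, which at $\lambda=\rho(P)$ gives $c\,{\bf v}^T{\bf z}\ne 0$ and correctly upgrades geometric to algebraic simplicity --- you rightly flag that a one-dimensional eigenspace alone does not rule out a Jordan block, a point many attempts miss. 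Two small repairs are needed. First, your sentence proving (a) is garbled as written; the intended argument is simply that $\rho(P)=0$ together with $P{\bf z}={\bf 0}$, ${\bf z}>{\bf 0}$, and $P\ge 0$ forces $P=0$, which is reducible for $n\ge 2$, contradiction. Second, in the adjugate step, the claim that the rows of $\mathrm{adj}(\rho(P)I-P)$ lie in $\mathrm{span}({\bf v}^T)$ needs geometric simplicity of $\rho(P)$ as an eigenvalue of $P^T$ (so that the left null space is one-dimensional); your phrase about applying everything already proved to $P^T$ covers this, but it deserves an explicit sentence, since the rank-$(n-1)$ assertion rests on it. For comparison, the common textbook route to (d) --- including the cited reference's --- transfers algebraic simplicity from the strictly positive matrix $(I+P)^{n-1}$ back to $P$ via the spectral mapping on Jordan blocks, whereas your Jacobi-formula derivative argument is an equally standard alternative that keeps the whole proof self-contained at the level of the characteristic polynomial.
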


\begin{teor}[\cite{HoJo}, p. 516]\label{th:nonpri}
If $P \in M_n$ is nonnegative and primitive, then

\[
lim_{k \longrightarrow \infty}[\rho(P)^{-1} P]^{k} = T > 0
\]
where $T={\bf x} {\bf y}^T$, $P{\bf x} = \rho(P) {\bf x}$, $P^{T}{\bf y} = \rho(P) {\bf y}$,
${\bf x} >0$,
${\bf y} >0$, and ${\bf x}^T {\bf y}=1$.

\end{teor}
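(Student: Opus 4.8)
The plan is to reduce to the case $\rho(P) = 1$ and then read off the limit from the Jordan structure of $P$. First I would normalize by setting $\hat{P} = \rho(P)^{-1}P$, so that $\rho(\hat{P}) = 1$ and the assertion becomes $\lim_{k\to\infty}\hat{P}^{\,k} = \mathbf{x}\mathbf{y}^T$. Since a primitive matrix is in particular nonnegative and irreducible, Theorem \ref{th:perrfro} applies to $P$: the spectral radius $\rho(P)$ is positive, is an algebraically (and geometrically) simple eigenvalue, and admits a positive right eigenvector $\mathbf{x}>0$. Applying the same theorem to $P^T$, which is nonnegative, irreducible, and shares the spectral radius of $P$, produces a positive left eigenvector $\mathbf{y}>0$ with $P^T\mathbf{y} = \rho(P)\mathbf{y}$. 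Because the eigenvalue $\rho(P)$ is algebraically simple, the associated left and right eigenvectors cannot be orthogonal, so $\mathbf{y}^T\mathbf{x} \neq 0$ and I may rescale $\mathbf{x}$ and $\mathbf{y}$ to enforce $\mathbf{x}^T\mathbf{y} = 1$.

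Next I would invoke primitivity in its essential role. By definition $\rho(P)$ is the \emph{only} eigenvalue of maximum modulus, so every eigenvalue $\lambda$ of $\hat{P}$ other than $1$ satisfies $|\lambda| < 1$. Combined with the algebraic simplicity of $1$, this lets me write the Jordan canonical form of $\hat{P}$ as a block-diagonal matrix whose leading block is the $1\times 1$ block $[\,1\,]$ and whose remaining Jordan blocks all belong to eigenvalues of modulus strictly below $1$. Equivalently, decompose $\mathbb{C}^n = \operatorname{span}(\mathbf{x}) \oplus M$, where $M$ is the $\hat{P}$-invariant complement spanned by the generalized eigenvectors of the remaining eigenvalues; then the spectral radius of the restriction $\hat{P}|_M$ equals $\max\{\,|\lambda| : \lambda \neq 1\,\} < 1$.

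The key step is to identify the limit with the spectral projection onto $\operatorname{span}(\mathbf{x})$ along $M$. Raising the block form to the $k$-th power, the leading entry stays $1$, while each remaining Jordan block $J_\lambda$ obeys $J_\lambda^{\,k} \to 0$, a standard consequence of $\rho(J_\lambda) < 1$; hence $\hat{P}^{\,k}$ converges. I would then check that the limit is exactly the rank-one projector $E_1 = \mathbf{x}\mathbf{y}^T$: under the normalization $\mathbf{x}^T\mathbf{y} = 1$ one verifies $E_1^2 = E_1$, $\hat{P}E_1 = E_1\hat{P} = E_1$, and $E_1\mathbf{x} = \mathbf{x}$ while $E_1$ annihilates $M$, which pins $E_1$ down uniquely. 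Finally, positivity $T = \mathbf{x}\mathbf{y}^T > 0$ is immediate from $\mathbf{x}>0$ and $\mathbf{y}>0$; and since scaling $P$ by $\rho(P)^{-1}$ leaves the eigenvectors untouched, the $\mathbf{x}$ and $\mathbf{y}$ obtained above are precisely those in the statement, satisfying $P\mathbf{x} = \rho(P)\mathbf{x}$ and $P^T\mathbf{y} = \rho(P)\mathbf{y}$.

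The main obstacle I anticipate is justifying that the complementary block tends to zero and that the surviving block is exactly $\mathbf{x}\mathbf{y}^T$ with the correct normalization. This is the point at which primitivity, rather than mere irreducibility, is indispensable: an irreducible but imprimitive matrix carries several eigenvalues on the circle $|\lambda| = \rho(P)$, the restriction $\hat{P}|_M$ then has spectral radius $1$, and the powers $\hat{P}^{\,k}$ need not converge at all.
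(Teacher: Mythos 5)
The paper contains no proof of this statement to compare yours against: Theorem \ref{th:nonpri} is imported verbatim from Horn and Johnson (p.~516) and used as a black box in the proof of Theorem \ref{teow}. Judged on its own, your argument is correct and complete, and it is essentially the standard textbook derivation of this result: normalize to $\hat{P}=\rho(P)^{-1}P$, invoke Theorem \ref{th:perrfro} for the algebraic simplicity of $\rho(P)$ and the positive right and left eigenvectors, use primitivity to push every eigenvalue other than $1$ strictly inside the unit circle, and read off $\lim_{k\to\infty}\hat{P}^{k}$ from the Jordan form as the rank-one spectral projector ${\bf x}{\bf y}^T$. Two small remarks. First, your justification that ${\bf y}^T{\bf x}\neq 0$ via algebraic simplicity is valid but indirect in this setting: since ${\bf x}>0$ and ${\bf y}>0$ componentwise, ${\bf x}^T{\bf y}>0$ is immediate, which is the cleaner route under Perron--Frobenius hypotheses. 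Second, your claim that $E_1={\bf x}{\bf y}^T$ annihilates $M$ deserves the one-line verification: if $(\hat{P}-\lambda I)^m{\bf u}={\bf 0}$ with $\lambda\neq 1$, then applying ${\bf y}^T$ (a fixed left eigenvector of $\hat{P}$) gives $(1-\lambda)^m\,{\bf y}^T{\bf u}=0$, hence ${\bf y}^T{\bf u}=0$; with that, your identification of the limit as the projection onto $\operatorname{span}({\bf x})$ along $M$ is airtight, since a projector is determined by its range and kernel. Your closing diagnosis of where primitivity, rather than mere irreducibility, enters --- an imprimitive irreducible matrix has several peripheral eigenvalues, so $\hat{P}^{k}$ fails to converge --- is exactly the right point to flag, and it is the reason the paper needs Remark \ref{rem} (a positive diagonal entry of $P_w$, guaranteed by the step-size condition on $\epsilon$) before applying this theorem.
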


\begin{remark}\label{rem}
It is known (see \cite{Varga}, p. 48) that given $P$ an irreducible nonnegative matrix then
if $P$ has at least a diagonal entry positive, then $P$ is primitive (in fact, the index of primitivity is related
with the number of diagonal entries positive).  Therefore, we can apply theorem \ref{th:nonpri} to matrices
that are irreducible nonnegative with at least a diagonal entry positive.
\end{remark}

\section{Main Result}

In this section we prove the main theorem and a corollary.

\begin{teor}\label{teow}
Let $G$ be a strongly connected digraph.
If $\epsilon < min_{i \in N}(w_i/d_i)$ then the scheme \nref{eq:pxo} converges to the weighted average
consensus given by

\[
{\bf x_w} = \alpha \, {\bf e}
\]
with ${\bf e}$ the vector of all ones, and $\alpha = \frac{1}{\| {\bf v} \|_1} { {\bf v}^{T} {\bf x}^0}$,
where $ {\bf v}$ is a positive eigenvector of $L_w^T$ associated with the zero eigenvalue, that is:

\[
 L_w^T   {\bf v} = {\bf 0}
\]

\begin{proof} Let us begin with the existence of the positive vector ${\bf v}$.
Since $G$ is strongly connected we have that  $L_w^T$ is irreducible and therefore
$P_w^T$ is irreducible nonnegative. Therefore
by Theorem \ref{th:perrfro} there exists a positive vector ${\bf v}$ such that
$P_w^T {\bf v} = \rho(P_w^T) {\bf v}$. And since $P_w^T$ is column stochastic we have
$\rho(P_w^T)=1$. Then
$P_w^T {\bf v} =  {\bf v}$ and it follows that $(I - \epsilon L_w^T){\bf v} = {\bf v}$ and therefore
$L_w^T {\bf v} = {\bf 0}$.

Since  $\epsilon < w_i/d_i$ for some $i \in N$ we have that $P_w$ is an irreducible nonnegative matrix
with at least a diagonal entry positive. Therefore from remark \ref{rem} we have that $P_w$ is primitive and we can apply theorem \ref{th:nonpri} to conclude that

\[
lim_{k \longrightarrow \infty}[P_w]^{k} = T > 0
\]

where $T={\bf x} {\bf v}^T$, $P_w{\bf x} = {\bf x}$, $P_w^{T}{\bf v}= {\bf v}$,
${\bf x} >0$,
${\bf v} >0$, and ${\bf x}^T {\bf v}=1$.

Since $P_w$  is row stochastic we have that
$P_w {\bf e} = {\bf e}$, therefore we can write
\[
{\bf x} = \alpha \, {\bf e}
\]
for some $\alpha >0$. But the condition  ${\bf x}^T {\bf v}=1$ leads to
$
 \alpha \, {\bf e}^T \, {\bf v}=1
$, that is
\[
 \alpha = \frac{1}{ \| {\bf v} \|_1}
\]
And therefore

\[
T={\bf x} {\bf v}^T = \alpha {\bf e} {\bf v}^T
\]

And then the weighted average consensus is given by

\[
{\bf x_w} = \lim_{k \longrightarrow \infty}[P_w]^{k} {\bf x}^0 = \alpha {\bf e}   {\bf v}^T {\bf x}^0
= \alpha   {\bf v}^T {\bf x}^0 {\bf e}
\]
and the proof follows.
\end{proof}
\end{teor}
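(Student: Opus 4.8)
The plan is to reduce everything to the Perron--Frobenius results of Section 3 applied to the iteration matrix $P_w = I - \epsilon L_w$ of \nref{eq:P_w}. So the first thing I would do is extract the three structural properties of $P_w$ that those theorems require: nonnegativity, irreducibility, and row-stochasticity. The off-diagonal entries satisfy $(P_w)_{ij} = \epsilon a_{ij}/w_i \geq 0$, while each diagonal entry equals $1 - \epsilon d_i/w_i$. This is exactly where the hypothesis enters: the bound $\epsilon < \min_{i \in N}(w_i/d_i)$ forces $1 - \epsilon d_i/w_i > 0$ for every $i$, so $P_w$ is nonnegative and, moreover, has an entirely positive diagonal. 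Irreducibility is inherited because multiplying $L$ by the positive diagonal $W^{-1}$ and forming $I - \epsilon L_w$ does not alter the off-diagonal zero pattern, so the digraph of $P_w$ is the strongly connected $G$; hence both $P_w$ and $P_w^T$ are irreducible. Finally, $L{\bf e} = {\bf 0}$ gives $P_w {\bf e} = {\bf e} - \epsilon W^{-1} L {\bf e} = {\bf e}$, so $P_w$ is row-stochastic and $\rho(P_w) = 1$.

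Second, I would produce the positive left null-vector ${\bf v}$. Since $P_w^T$ is irreducible and nonnegative, Theorem \ref{th:perrfro} yields a positive vector ${\bf v}$ with $P_w^T {\bf v} = \rho(P_w^T){\bf v}$; because $\rho(P_w^T) = \rho(P_w) = 1$, this reads $(I - \epsilon L_w^T){\bf v} = {\bf v}$, i.e. $L_w^T {\bf v} = {\bf 0}$, which is the asserted eigenvector relation.

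Third, I would establish convergence of $P_w^k$ and read off the limit. Having already secured irreducibility, nonnegativity, and a positive diagonal entry, Remark \ref{rem} makes $P_w$ primitive, so Theorem \ref{th:nonpri} applies with $\rho(P_w) = 1$ and gives $\lim_{k \to \infty} P_w^k = T = {\bf x}{\bf v}^T > 0$, where $P_w {\bf x} = {\bf x}$, ${\bf x} > 0$, and ${\bf x}^T {\bf v} = 1$. Because $1$ is a simple eigenvalue and $P_w {\bf e} = {\bf e}$, the right eigenvector ${\bf x}$ must be a positive multiple of ${\bf e}$, say ${\bf x} = \alpha {\bf e}$; the normalization ${\bf x}^T {\bf v} = 1$ then forces $\alpha {\bf e}^T {\bf v} = 1$, i.e. $\alpha = 1/\|{\bf v}\|_1$ since ${\bf v} > 0$. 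Substituting into \nref{eq:pxo} gives ${\bf x_w} = T {\bf x}^0 = \alpha {\bf e}{\bf v}^T {\bf x}^0 = \bigl(\tfrac{1}{\|{\bf v}\|_1}{\bf v}^T {\bf x}^0\bigr){\bf e}$, the claimed value.

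The step I expect to carry the real weight is translating the scalar bound on $\epsilon$ into the matrix-level facts that $P_w$ is simultaneously nonnegative and has a positive diagonal: that single observation is what unlocks both Perron--Frobenius (via nonnegativity plus irreducibility) and primitivity (via Remark \ref{rem}), after which convergence and the explicit limit follow almost mechanically. The other point worth checking carefully is that $\rho(P_w) = 1$ exactly, rather than merely $\rho(P_w) \leq 1$, which is precisely what row-stochasticity together with the attainment of the Perron eigenvalue guarantees.
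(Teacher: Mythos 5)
Your proposal is correct and follows essentially the same route as the paper's own proof: Perron--Frobenius on $P_w^T$ for the positive left eigenvector ${\bf v}$, primitivity of $P_w$ via the positive diagonal and Remark \ref{rem}, and Theorem \ref{th:nonpri} to identify the limit $T = \alpha\,{\bf e}{\bf v}^T$. In fact you make explicit several steps the paper treats implicitly (the nonnegativity of $P_w$ from the bound on $\epsilon$, the preservation of the zero pattern for irreducibility, and row-stochasticity from $L{\bf e}={\bf 0}$), so the argument is complete.
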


{\bf Remark} Note that, as a particular case,  taking the weights ${\bf w}={\bf e}$ this theorem gives theorem 2 of
\cite{OlFaMu}.

In the symmetric case (undirected graph) we have the following

\begin{coro}
Let $G$ be a strongly connected undirected graph.
If $\epsilon < min_{i \in N}(w_i/d_i)$ then the scheme \nref{eq:pxo} converges to the weighted-average
consensus given by  ${\bf x_w}=\alpha \, {\bf e}$ with

\[
\alpha = \frac{1}{\| {\bf w} \|} {\bf w}^T {\bf x}^0   =  \frac{\sum_{i} w_i x_i^0}{  \sum_{i} w_i  }
\]
\end{coro}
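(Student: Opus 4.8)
The plan is to deduce the corollary directly from Theorem \ref{teow} by computing the eigenvector ${\bf v}$ explicitly in the symmetric case. An undirected strongly connected graph is in particular a strongly connected digraph, so the hypotheses of Theorem \ref{teow} are met: the scheme \nref{eq:pxo} converges to $\alpha {\bf e}$ with $\alpha = \frac{1}{\|{\bf v}\|_1}{\bf v}^T {\bf x}^0$, where ${\bf v}$ is the positive vector satisfying $L_w^T {\bf v} = {\bf 0}$. It therefore remains only to identify this ${\bf v}$ for an undirected graph.

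First I would exploit the symmetry of the graph. For an undirected graph the adjacency matrix $A$ is symmetric, so $L = D - A$ is symmetric and $L^T = L$. Since $W$ is diagonal, $W^{-1}$ is also symmetric, whence $L_w^T = (W^{-1}L)^T = L^T W^{-1} = L W^{-1}$. The defining equation $L_w^T {\bf v} = {\bf 0}$ thus reduces to $L W^{-1}{\bf v} = {\bf 0}$.

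Next I would exhibit a solution. Recalling that $L{\bf e} = {\bf 0}$ holds by construction, the choice ${\bf v} = {\bf w} = W{\bf e}$ gives $W^{-1}{\bf v} = {\bf e}$ and hence $L W^{-1}{\bf v} = L{\bf e} = {\bf 0}$. Since every weight $w_i$ is positive, ${\bf v} = {\bf w}$ is a positive vector; and because $G$ is connected the zero eigenvalue of $L_w^T$ is algebraically simple by Theorem \ref{th:perrfro}, so this ${\bf v}$ is, up to a positive scalar, the unique positive eigenvector. The scaling is immaterial in the expression for $\alpha$.

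Finally I would substitute. With ${\bf v} = {\bf w}$ and $w_i > 0$ one has $\|{\bf v}\|_1 = \sum_i w_i$, so
\[
\alpha = \frac{1}{\|{\bf v}\|_1}{\bf v}^T {\bf x}^0 = \frac{{\bf w}^T {\bf x}^0}{\sum_i w_i} = \frac{\sum_i w_i x_i^0}{\sum_i w_i},
\]
which is precisely the claimed value (the norm appearing in the statement being the $l_1$ norm), and the proof follows. The only delicate point is the identification of ${\bf v}$: the key observation is that the symmetry $L^T = L$ converts the left-null-vector problem for $L_w^T$ into the right-null-space problem for $L$, which is already known to contain ${\bf e}$, so that ${\bf w}$ emerges immediately.
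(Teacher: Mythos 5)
Your proof is correct and takes essentially the same route as the paper: symmetry of $L$ converts $L_w^T {\bf v} = {\bf 0}$ into $L W^{-1} {\bf v} = {\bf 0}$, whence ${\bf v}$ is a positive multiple of ${\bf w}$ and the claimed value of $\alpha$ follows by substitution (the paper phrases the identification via the null space of $L$ being $\beta\,{\bf e}$, you via exhibiting ${\bf v}={\bf w}$ plus simplicity -- the same fact). One small citation slip worth fixing: Theorem \ref{th:perrfro} applies to the nonnegative irreducible matrix $P_w^T$, not to $L_w^T$ itself, so the simplicity of the zero eigenvalue of $L_w^T$ should be deduced from the simplicity of $\rho(P_w^T)=1$ together with the relation $P_w^T = I - \epsilon L_w^T$.
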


\begin{proof}
From theorem \ref{teow} we have that
\[
 L_w^T   {\bf v} = {\bf 0}
\]
that is
$(W^{-1}L)^T {\bf v} = {\bf 0}$ and therefore
\[
L^T W^{-1}  {\bf v} = {\bf 0}
\]
since $W^{-1}$ is a diagonal matrix. Now,
since $L$ is a symmetric matrix we have
\[
L W^{-1}  {\bf v} = {\bf 0}
\]
which means
\[
L  \left[
     \begin{array}{c}
       v_1/w_1 \\
       v_2/w_2 \\
       \vdots\\
       v_n/w_n  \\
     \end{array}
   \right]
= {\bf 0}
\]
and since $\beta \,  {\bf e}$ for $\beta \in \mathbb{R}$ is the eigenspace associated to $\lambda=0$
we have that

\[
 \left[
     \begin{array}{c}
       v_1/w_1 \\
       v_2/w_2 \\
       \vdots\\
       v_n/w_n  \\
     \end{array}
   \right] = \beta  \left[
     \begin{array}{c}
       1 \\
       1 \\
       \vdots\\
       1  \\
     \end{array}
   \right], \forall \beta \in \mathbb{R}
\]
which means
\[
{\bf v} = \beta {\bf w}
\]
Therefore from theorem \ref{teow} we have that the value of the
weighted-average
consensus is ${\bf x_w}=\alpha \, {\bf e}$ with

\[
\alpha =
\frac{1}{\| {\bf v} \|_1} { {\bf v}^{T} {\bf x}^0} =
\frac{1}{\| \beta {\bf w} \|_1} { \beta {\bf w}^{T} {\bf x}^0} =
\frac{1}{\|  {\bf w} \|_1} {  {\bf w}^{T} {\bf x}^0}.
\]

\end{proof}

\section{Conclusion}
In this note we provide sufficient conditions for the convergence of the so-called weighted-average consensus in discrete form for directed graphs. This algorithm is described in \cite{OlFaMu}, but   to our knowledge no sufficient conditions for the
convergence of this algorithm has been already published in the literature. As a particular case our result gives
known-results on non-weighted consensus.  We also provide a corollary for undirected graphs.
These algorithms are commonly used in multi-agent systems.

\section*{Acknowledgments} \noindent This work is supported by
Spanish DGI grant MTM2010-18674,
Consolider Ingenio CSD2007-00022,
PROMETEO 2008/051,
OVAMAH TIN2009-13839-C03-01, and
PAID-06-11-2084.

\end{document}